\numberwithin{equation}{section}
\newtheorem{theorem}{Theorem}[section]
\newtheorem{lemma}[theorem]{Lemma}
\newtheorem{proposition}[theorem]{Proposition}
\theoremstyle{definition}
\newtheorem{definition}[theorem]{Definition}
\theoremstyle{remark}
\begin{document}

\title[Bilinear estimates on Morrey spaces]{Bilinear estimates on Morrey spaces by using average}
\author{Naoya Hatano}
\address[Naoya Hatano]{Department of Mathematics, Chuo University, 1-13-27, Kasuga, Bunkyo-ku, Tokyo 112-8551, Japan}
\email[Naoya Hatano]{n18012@gug.math.chuo-u.ac.jp}
\maketitle

\begin{abstract}
This paper is a follow up of \cite{HaSa19}. 
We investigate the boundedness of the bilinear fractional integral operator introduced by Grafakos in \cite{Grafakos92}.
When the local integrability index $s$ falls $1$ with weights and $t$ exceeds $1$, 
He and Yan obtained some results
on this operator was worked on Morrey spaces earlier in \cite{HeYa18}. 
Later in the paper \cite{HaSa19}, we considered the case $t=1$. 
This paper handles the remaining case $0<t<1$.
\end{abstract}

{\bf Keywords}
Morrey spaces,
bilinear fractional integral operators,
dyadic cubes,
average technique.

{\bf Mathematics Subject Classifications (2010)} 
Primary 42B35; Secondary 42B25

\section{Introduction}

The bilinear fractional integral operator of Grafakos type $\mathcal{J}_\alpha$, which is given by
\[
\mathcal{J}_\alpha[f_1,f_2](x)\equiv\int_{\mathbb{R}^n}\frac{f_1(x+y)f_2(x-y)}{|y|^{n-\alpha}}{\rm d}y, \quad x\in\mathbb{R}^n,
\]
for non-negative measurable functions or more general complex-valued measurable functions $f_1,f_2$ subject to a certain integrability condition,
it is known that operator $\mathcal{J}_\alpha$ is bounded bilinear fractional integral operator from product Lebesgue space $L^{p_1}(\mathbb{R}^n)\times L^{p_2}(\mathbb{R}^n)$ to Lebesgue space $L^s(\mathbb{R}^n)$ in \cite{KeSt99}.
We would like to expand this result and show that operator $\mathcal{J}_\alpha$ is bounded bilinear fractional integral operator from product Morrey space $\mathcal{M}^{p_1}_{q_1}(\mathbb{R}^n)\times\mathcal{M}^{p_2}_{q_2}(\mathbb{R}^n)$ to Morrey space $\mathcal{M}^s_t$.
We discuss it under $s<\min(q_1,q_2)$.
In the paper \cite{HaSa19}, when $1\le t\le s<\min(q_1,q_2)$, we obtained the result.
When $0<t\le s<1$, in the paper \cite{HeYa18}, He and Yan show bilinear estimates with weights.
In this paper, we give an alternative proof of non-weighted type of their results.
Furthermore developing this proof,
we obtain the case $0<t\le 1\le s<\min(q_1,q_2)$.

Let $0<q\le p<\infty$. 
Define the {\it Morrey space} ${\mathcal M}^p_q({\mathbb R}^n)$ by
\[
\mathcal{M}^p_q(\mathbb{R}^n)\equiv
\left\{
f\in L^0(\mathbb{R}^n):
\|f\|_{\mathcal{M}^p_q}\equiv\sup_{Q\in\mathcal{D}}|Q|^{\frac1p-\frac1q}\left(\int_Q|f(x)|^qdx\right)^{\frac1q}<\infty
\right\},
\]
where $\mathcal{D}$ denotes the family of all dyadic cubes in $\mathbb{R}^n$. We recall the definition of the dyadic cubes precisely in Section \ref{s2}.

Grafakos introduced  the bilinear fractional integral operator in \cite{Grafakos92}.

\begin{definition}
Let $0<\alpha<n$. Define the {\it bilinear fractional integral operator} by
\[
\mathcal{J}_\alpha[f_1,f_2](x)\equiv\int_{\mathbb{R}^n}\frac{f_1(x+y)f_2(x-y)}{|y|^{n-\alpha}}{\rm d}y, \quad x\in\mathbb{R}^n,
\]
for measurable functions $f_1,f_2$ defined in $\mathbb{R}^n$.
\end{definition}

First,
He and Yan, under the condition $\frac1{q_1}+\frac1{q_2}<1$ and $1<t\le s<\infty$, investigated the operator $\mathcal{J}_\alpha$ acting on Morrey spaces in \cite{HeYa18} earlier.
Next,
we prove the case $1\le t\le s<\min(q_1,q_2)$ in \cite{HaSa19} as follows:

\begin{theorem}\label{thm:190204-18}
Let
\[
0<\alpha<n,\quad
1<q_1 \le p_1<\infty,\quad
1<q_2 \le p_2<\infty,\quad
1\le t \le s<\infty.
\]
Define $p$ and $q$ by
\[
\frac{1}{p}=\frac{1}{p_1}+\frac{1}{p_2}, \quad
\frac{1}{q}=\frac{1}{q_1}+\frac{1}{q_2}, 
\]
Assume that 
\[
\frac{1}{s}=\frac{1}{p}-\frac{\alpha}{n}, \quad
\frac{q}{p}=\frac{t}{s}, \quad
s<\min(q_1,q_2).
\]
Then
for all
$f_1 \in {\mathcal M}^{p_1}_{q_1}({\mathbb R}^n)$
and
$f_2 \in {\mathcal M}^{p_2}_{q_2}({\mathbb R}^n)$,
\[
\|{\mathcal J}_\alpha[f_1,f_2]\|_{{\mathcal M}^s_t}
\lesssim
\|f_1\|_{{\mathcal M}^{p_1}_{q_1}}
\|f_2\|_{{\mathcal M}^{p_2}_{q_2}}.
\]
\end{theorem}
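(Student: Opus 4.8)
The plan is to reduce the Morrey bound to a single scalar estimate for the bilinear maximal operator attached to $\mathcal J_\alpha$. Since
\[
\|\mathcal J_\alpha[f_1,f_2]\|_{\mathcal M^s_t}=\sup_{Q\in\mathcal D}|Q|^{\frac1s-\frac1t}\left(\int_Q|\mathcal J_\alpha[f_1,f_2](x)|^t\,dx\right)^{\frac1t},
\]
it suffices to bound the quantity inside the supremum uniformly in the dyadic cube $Q$. First I would decompose the kernel dyadically in $y$: on the annulus $2^{j-1}\le|y|<2^j$ one has $|y|^{n-\alpha}\sim 2^{j(n-\alpha)}$, whence
\[
\mathcal J_\alpha[f_1,f_2](x)\lesssim\sum_{j\in\mathbb Z}2^{j\alpha}A_j(x),\qquad A_j(x)\equiv\frac1{2^{jn}}\int_{|y|<2^j}|f_1(x+y)f_2(x-y)|\,dy,
\]
where $A_j$ is the bilinear average at scale $2^j$ and $\mathcal N[f_1,f_2](x)\equiv\sup_{j}A_j(x)$ is the associated bilinear (Hardy--Littlewood) maximal operator.

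The guiding idea is a Hedberg-type argument. Splitting the $j$-sum at a free threshold $R>0$, the low scales are dominated by $\big(\sum_{2^j\le R}2^{j\alpha}\big)\mathcal N[f_1,f_2](x)\lesssim R^\alpha\,\mathcal N[f_1,f_2](x)$ (using $\alpha>0$), while for the high scales one would bound $A_j(x)\lesssim 2^{-jn/p}\|f_1\|_{\mathcal M^{p_1}_{q_1}}\|f_2\|_{\mathcal M^{p_2}_{q_2}}$; since $\alpha-\frac np=-\frac ns<0$ by $\frac1s=\frac1p-\frac\alpha n$, the high-scale sum converges geometrically to $\lesssim R^{-n/s}\|f_1\|_{\mathcal M^{p_1}_{q_1}}\|f_2\|_{\mathcal M^{p_2}_{q_2}}$. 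Optimising in $R$ and using $\alpha+\frac ns=\frac np$ (so the exponent is exactly $p/s\in(0,1)$, as $s>p$) gives the pointwise inequality
\[
\mathcal J_\alpha[f_1,f_2](x)\lesssim\mathcal N[f_1,f_2](x)^{p/s}\big(\|f_1\|_{\mathcal M^{p_1}_{q_1}}\|f_2\|_{\mathcal M^{p_2}_{q_2}}\big)^{1-p/s}.
\]
Taking the $\mathcal M^s_t$ norm and using $q=\frac{pt}{s}$, the identity $\|g^{p/s}\|_{\mathcal M^s_t}=\|g\|_{\mathcal M^p_q}^{p/s}$ holds (the powers of $|Q|$ match precisely, since $\frac qp=\frac ts$), so the theorem collapses to the bilinear maximal estimate
\[
\|\mathcal N[f_1,f_2]\|_{\mathcal M^p_q}\lesssim\|f_1\|_{\mathcal M^{p_1}_{q_1}}\|f_2\|_{\mathcal M^{p_2}_{q_2}},\qquad \tfrac1p=\tfrac1{p_1}+\tfrac1{p_2},\quad \tfrac1q=\tfrac1{q_1}+\tfrac1{q_2}.
\]

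When one can pick $r_i<q_i$ with $\frac1{r_1}+\frac1{r_2}\le1$, Hölder on each ball yields $\mathcal N[f_1,f_2]\lesssim(M[|f_1|^{r_1}])^{1/r_1}(M[|f_2|^{r_2}])^{1/r_2}$, and the bilinear maximal estimate follows from boundedness of the Hardy--Littlewood maximal operator on $\mathcal M^{p_i/r_i}_{q_i/r_i}$ (valid since $q_i/r_i>1$, because $|f_i|^{r_i}\in\mathcal M^{p_i/r_i}_{q_i/r_i}$) combined with Hölder's inequality for Morrey spaces. In that case both the high-scale step above and the final estimate go through cleanly.

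\emph{Main obstacle.} The hypotheses do not force $\frac1{q_1}+\frac1{q_2}<1$: they only give $q_1,q_2>s\ge1$, and when $s$ is close to $1$ the inner indices may be close to $1$ as well, so $\frac1{q_1}+\frac1{q_2}$ can exceed $1$. In that regime no admissible pair $r_i<q_i$ with $\frac1{r_1}+\frac1{r_2}\le1$ exists, the pointwise domination of $\mathcal N[f_1,f_2]$ by a product of ordinary maximal functions breaks down, and the naive pointwise bound $A_j(x)\lesssim 2^{-jn/p}\|f_1\|\,\|f_2\|$ is no longer available from Hölder alone. This is where the \emph{average technique} of the title is essential: rather than bounding pointwise, one keeps the two factors together and estimates the bilinear average in integrated form over $Q$ — exploiting its convolution structure (Young's inequality, which tolerates $\frac1{q_1}+\frac1{q_2}>1$ precisely because the output exponent stays finite) and localising $f_1,f_2$ to a fixed dilate of the ambient cube, controlled through the Morrey norms. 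Carrying this out for both the local and global scale ranges, and then bookkeeping the powers of $\ell(Q)$ so that the geometric series over scales converge (from $\alpha>0$ and $\alpha<\frac np$) and the prefactor $|Q|^{\frac1s-\frac1t}$ is absorbed, is the technical heart of the proof; the relations $\frac1s=\frac1p-\frac\alpha n$, $\frac qp=\frac ts$ and $s<\min(q_1,q_2)$ are exactly what make this bookkeeping close.
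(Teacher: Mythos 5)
Your dyadic decomposition and the Hedberg optimisation are set up correctly, and you are right that the whole scheme closes when $\tfrac1{q_1}+\tfrac1{q_2}<1$. But you have also correctly located the hard case, $\tfrac1{q_1}+\tfrac1{q_2}\ge 1$ (equivalently $q\le1$, which does occur here: e.g.\ $t=1$ forces $\tfrac1q=\tfrac sp>1$), and there your text stops at a description of what one ``would'' do rather than an argument. This is a genuine gap, not a cosmetic one. In that regime the high-scale bound $A_j(x)\lesssim 2^{-jn/p}\|f_1\|_{\mathcal M^{p_1}_{q_1}}\|f_2\|_{\mathcal M^{p_2}_{q_2}}$ is not merely ``unavailable from H\"older'': it can genuinely fail pointwise. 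For instance with $n=1$, $p_1=p_2=q_1=q_2=4/3$, $\alpha=1/2$, $s=t=1$ (an admissible instance of the hypotheses) and $f_i(z)=|z|^{-3/5}\chi_{[-1,1]}(z)\in\mathcal M^{p_i}_{q_i}$, one has $A_j(0)=\infty$ for every $j$, so $\mathcal N[f_1,f_2](0)=\infty$; your intermediate pointwise inequality is vacuous there, and the target estimate $\|\mathcal N[f_1,f_2]\|_{\mathcal M^p_q}\lesssim\|f_1\|_{\mathcal M^{p_1}_{q_1}}\|f_2\|_{\mathcal M^{p_2}_{q_2}}$ is exactly as inaccessible as the theorem itself once the domination by $M^{(r_1)}f_1\cdot M^{(r_2)}f_2$ is lost. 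The fix you gesture at (Young's inequality on the entangled average) is not the mechanism that works.

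The actual proof (in \cite{HaSa19}, where Theorem \ref{thm:190204-18} is established; the present paper only quotes it, but runs the same method for Theorems \ref{thm:190322-2} and \ref{thm:190322-4}) performs your two steps in the opposite order. One first dominates $\mathcal J_\alpha[f_1,f_2]$ pointwise by $\sum_j\sum_{Q\in\mathcal D_j}F_{j,Q}$ with $F_{j,Q}=\ell(Q)^{\alpha-n}\chi_Q\int_{B(2^{-j})}f_1(\cdot+y)f_2(\cdot-y)\,{\rm d}y$ (Lemma \ref{lem:180615-41}), and then invokes an averaging principle (Proposition \ref{prop:190322-1} and Theorems \ref{thm:190322-1}, \ref{thm:190322-3}; a duality variant when $t\ge1$) which allows each $F_{j,Q}$ to be replaced by $m_Q(F_{j,Q})\chi_Q$ or $m_Q^{(u)}(F_{j,Q})\chi_Q$ inside the Morrey norm. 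The decisive point is that averaging in $x$ over $Q$ decouples the entangled product: the change of variables $(y_1,y_2)=(x+y,x-y)$ gives
\[
m_Q(F_{j,Q})\lesssim\frac1{\ell(Q)^{2n-\alpha}}\int_{(3Q)^2}f_1(y_1)f_2(y_2)\,{\rm d}y_1{\rm d}y_2,
\]
after which H\"older is applied to each factor separately with its own exponent $q_i$, so no relation between $q_1$ and $q_2$ is needed. Only then is the Hedberg splitting in $\ell(Q)$ carried out, on the decoupled quantity (Lemmas \ref{lem:190322-1} and \ref{lem:190328-1}), where the high-scale estimate you wanted is actually available. So the missing idea is precisely this average-then-decouple step; your Hedberg bookkeeping is sound but is applied one step too early.
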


We investigate the boundedness property of ${\mathcal J}_\alpha$ when $0<t\le1$ in this paper.
Two cases must be considered
according to the value of $s$.
In the Theorems 4.2 and 4.6 in \cite{HeYa18} which are boundednesses of the bilinear fractional integral operator on Morrey spaces with weights, taking weights to identical equal to 1 implies the following theorem.
So their results cover the Theorem \ref{thm:190322-2} below.

\begin{theorem}{\cite[Theorems 4.2 and 4.6]{HeYa18}}\label{thm:190322-2}
Let $0<\alpha<n$, $1<q_j\le p_j<\infty$, $0<q\le p<\infty$ and $0<t\le s<1$ for $j=1,2$. Assume that
\begin{align*}
&\frac1p=\frac1{p_1}+\frac1{p_2}, \quad \frac1q=\frac1{q_1}+\frac1{q_2},\\
&\frac1s=\frac1p-\frac{\alpha}n, \quad \frac qp=\frac ts.
\end{align*}
Then we have
\[
\|\mathcal{J}_\alpha[f_1,f_2]\|_{\mathcal{M}^s_t}
\lesssim
\|f_1\|_{\mathcal{M}^{p_1}_{q_1}}\|f_2\|_{\mathcal{M}^{p_2}_{q_2}}
\]
for any $f_1\in\mathcal{M}^{p_1}_{q_1}(\mathbb{R}^n)$ and for any $f_2\in\mathcal{M}^{p_2}_{q_2}(\mathbb{R}^n)$.
\end{theorem}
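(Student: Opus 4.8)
The plan is to reduce everything to a single dyadic cube, to exploit the subadditivity of $u\mapsto u^t$ that is available precisely because $t\le 1$, and to estimate the resulting one-scale pieces by local averages of $f_1$ and $f_2$ that are controlled by the Morrey norms.

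First I would fix a dyadic cube $Q_0\in\mathcal{D}$ and aim to bound $|Q_0|^{\frac1s-\frac1t}\bigl(\int_{Q_0}\mathcal{J}_\alpha[f_1,f_2]^t\,dx\bigr)^{1/t}$ by $\|f_1\|_{\mathcal{M}^{p_1}_{q_1}}\|f_2\|_{\mathcal{M}^{p_2}_{q_2}}$ uniformly in $Q_0$. Decomposing the kernel dyadically, $\mathcal{J}_\alpha[f_1,f_2](x)\approx\sum_{k\in\mathbb{Z}}2^{k(\alpha-n)}\int_{2^k\le|y|<2^{k+1}}f_1(x+y)f_2(x-y)\,dy=:\sum_{k}J_k(x)$ with each $J_k\ge 0$, and using that $u\mapsto u^t$ is subadditive for $0<t\le 1$, I obtain $\int_{Q_0}\mathcal{J}_\alpha[f_1,f_2]^t\,dx\le\sum_{k}\int_{Q_0}J_k^t\,dx$. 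This linearization of the $t$-th power over the scales is the structural feature that makes the regime $t\le 1$ more amenable than the case $1\le t$ of Theorem \ref{thm:190204-18}, where one must instead rely on Minkowski's inequality or duality.

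For each scale $k$ I would then estimate $\int_{Q_0}J_k^t\,dx$ by comparing $J_k$ with averages of $f_1$ and $f_2$ taken over cubes of side length comparable to $2^k$. Since $t<1$, a preliminary application of Hölder's inequality on $Q_0$ passes to the first power, after which the key input is the defining inequality $\bigl(\frac1{|Q|}\int_{Q}|f_j|^{q_j}\,dx\bigr)^{1/q_j}\le|Q|^{-1/p_j}\|f_j\|_{\mathcal{M}^{p_j}_{q_j}}$; the hypothesis $s<\min(q_1,q_2)$ is what keeps the local exponents arising here admissible. Each scale then contributes a fixed power of $2^k$ times $\|f_1\|_{\mathcal{M}^{p_1}_{q_1}}\|f_2\|_{\mathcal{M}^{p_2}_{q_2}}$, and the relations $\frac1s=\frac1p-\frac\alpha n$ and $\frac qp=\frac ts$ ensure that, once multiplied by $|Q_0|^{\frac1s-\frac1t}$, the exponent of $2^k$ is strictly negative for $2^k\gtrsim|Q_0|^{1/n}$ and strictly positive for $2^k\lesssim|Q_0|^{1/n}$. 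The sum over $k$ is therefore a convergent geometric series whose total is independent of $Q_0$, and taking the supremum over $Q_0$ finishes the argument.

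The hard part will be the bilinear coupling inside $J_k$. Because $\frac1s=\frac1p-\frac\alpha n$ with $s<1$ forces $p<1$, and hence $q\le p<1$, we have $\frac1{q_1}+\frac1{q_2}=\frac1q>1$: the two factors $f_1(x+y)$ and $f_2(x-y)$ cannot be separated by a single Hölder inequality with exponents $q_1$ and $q_2$, since the total local integrability demanded would exceed what the two Morrey norms supply. Indeed the correlation $\int_{|y|\sim 2^k}f_1(x+y)f_2(x-y)\,dy$ can genuinely exceed the product of the separate averages, so the required saving cannot be produced by Hölder alone and must instead be extracted from the decay of $|y|^{\alpha-n}$ across scales. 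I would resolve this either by first splitting the singular weight $|y|^{\alpha-n}$ between the two factors and then applying Hölder—reducing each factor to a fractional integral of a power of $f_j$—or, more robustly, by working directly with the coupled bilinear average and establishing its Morrey bound using $s<\min(q_1,q_2)$. Reconciling this treatment of the coupling with the convergence of the scale-summation is where the averaging technique does the essential work, and I expect it to be the principal technical obstacle.
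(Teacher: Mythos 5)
There is a genuine gap: the two steps you yourself flag as the crux are not carried out, and the fallback estimates you sketch for them do not close. First, the bilinear coupling. Your proposed resolutions --- splitting $|y|^{\alpha-n}$ between the factors and applying H\"{o}lder in $y$, or ``working directly with the coupled bilinear average'' --- are exactly the approaches that fail here: the first only yields products of fractional maximal/integral operators and does not reach the stated range of exponents, and the second is the unproved claim. The paper's mechanism is different and is the whole point of the ``average technique'': by Proposition \ref{prop:190322-1} (Grafakos--Kalton), extended to ${\mathcal M}^s_t$ with $t\le s<1$ in Theorem \ref{thm:190322-1}, one may replace each localized piece $F_{j,Q}=\frac{\chi_Q}{\ell(Q)^{n-\alpha}}\int_{B(2^{-j})}f_1(\cdot+y)f_2(\cdot-y)\,{\rm d}y$ by $m_Q(F_{j,Q})\chi_Q$ \emph{inside} the quasi-norm, and the integration in $x$ over a cube $Q$ whose side length matches the scale of the $y$-integration decouples the correlation via the change of variables $(x,y)\mapsto(x+y,x-y)$, giving $m_Q(F_{j,Q})\lesssim\ell(Q)^{\alpha-2n}\int_{(3Q)^2}f_1(y_1)f_2(y_2)\,{\rm d}y_1{\rm d}y_2$. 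Subadditivity of $u\mapsto u^t$ alone does not produce this; the average over the matching cube does.

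Second, your scale summation is quantitatively wrong for small scales. If at scale $2^k\le\ell(Q_0)$ you bound the (decoupled) contribution by the global Morrey norms, you get, after subdividing $Q_0$ into subcubes of side $2^k$, a bound of order $\ell(Q_0)^n\,2^{-kn/s}\|f_1\|_{{\mathcal M}^{p_1}_{q_1}}\|f_2\|_{{\mathcal M}^{p_2}_{q_2}}$ for $\int_{Q_0}J_k$, and $\sum_{k:\,2^k\le\ell(Q_0)}2^{-kn/s}$ diverges; the sign of the exponent is the opposite of what you assert. This is why the paper does not sum a geometric series in the Morrey norm at every scale but instead runs a Hedberg-type argument (Lemmas \ref{lem:190322-1} and \ref{lem:190328-1}): small scales are kept as $L^{\alpha}Mf_1(x)Mf_2(x)$, large scales give $L^{-n/s}\|f_1\|_{{\mathcal M}^{p_1}_{q_1}}\|f_2\|_{{\mathcal M}^{p_2}_{q_2}}$, $L$ is optimized pointwise, and one concludes with the Morrey boundedness of $M$ and H\"{o}lder's inequality for Morrey norms. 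Without these two ingredients your outline does not constitute a proof.
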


\begin{theorem}\label{thm:190322-4}
Let $0<\alpha<n$, $1<q_j\le p_j<\infty$, $0<q\le p<\infty$ and $0<t \le 1 \le s$ for $j=1,2$ with
\begin{align*}
&\frac1p=\frac1{p_1}+\frac1{p_2}, \quad \frac1q=\frac1{q_1}+\frac1{q_2},\\
&\frac1s=\frac1p-\frac{\alpha}n, \quad \frac qp=\frac ts, \quad s<\min(q_1,q_2).
\end{align*}
Then we have
\[
\|\mathcal{J}_\alpha[f_1,f_2]\|_{\mathcal{M}^s_t}
\lesssim
\|f_1\|_{\mathcal{M}^{p_1}_{q_1}}\|f_2\|_{\mathcal{M}^{p_2}_{q_2}}
\]
for any $f_1\in\mathcal{M}^{p_1}_{q_1}(\mathbb{R}^n)$ and for any $f_2\in\mathcal{M}^{p_2}_{q_2}(\mathbb{R}^n)$.
\end{theorem}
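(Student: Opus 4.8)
The plan is to follow the strategy used in \cite{HaSa19} for $t=1$ and adapt it to the quasi-norm range $0<t\le1$. First I would reduce to $f_1,f_2\ge0$ and fix a cube $Q\in\mathcal D$ of side length $\ell$; the goal is the uniform bound $|Q|^{\frac1s-\frac1t}\|\mathcal J_\alpha[f_1,f_2]\|_{L^t(Q)}\lesssim\|f_1\|_{\mathcal M^{p_1}_{q_1}}\|f_2\|_{\mathcal M^{p_2}_{q_2}}$. I would slice the $y$-integral into dyadic annuli $A_j=\{2^{j-1}\le|y|<2^j\}$, so that $\mathcal J_\alpha[f_1,f_2]\approx\sum_j 2^{j(\alpha-n)}g_j$ with $g_j(x)=\int_{A_j}f_1(x+y)f_2(x-y)\,dy$, and split into a near part ($2^j\le\ell$) and a far part ($2^j>\ell$). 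Since $0<t\le1$, the subadditivity $(\sum_j a_j)^t\le\sum_j a_j^t$ lets me move the $L^t(Q)$ quasi-norm inside the sum over annuli, reducing matters to estimating each $g_j$ and summing a geometric series in $j$.

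For the far part I would use the average technique. On each cube, $\|g_j\|_{L^t(Q)}\le|Q|^{\frac1t-1}\|g_j\|_{L^1(Q)}$ (valid for $t\le1$), and I would compute $\|g_j\|_{L^1(Q)}=\int_Q g_j$ by Fubini together with the change of variables $u=x+y$, $v=x-y$. This rewrites $\int_Q g_j$ as a sum over the $O((2^j/\ell)^n)$ dyadic cubes $R$ of side $\ell(Q)$ lying in the relevant dilate of $Q$ of the products $\big(\int_R f_1\big)\big(\int_{R'}f_2\big)$, where $R'$ is the companion cube forced by $\tfrac{u+v}2\in Q$. The decisive move is to apply Hölder for sequences with the exponents $q_1/q$ and $q_2/q$, which are conjugate precisely because $\tfrac{q}{q_1}+\tfrac{q}{q_2}=1$; this separates the two factors even though $\tfrac1{q_1}+\tfrac1{q_2}=\tfrac1q>1$. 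Bounding each average by the Morrey norms on the dilates of $Q$ and collecting the powers of $|Q|$ and $2^j$, the series over the far annuli converges exactly when $\tfrac1q<1+\tfrac1s$, which is guaranteed by $s<\min(q_1,q_2)$ (giving $\tfrac1q<\tfrac2s$) together with $s\ge1$ (giving $\tfrac2s\le1+\tfrac1s$); summing then reproduces the factor $|Q|^{\frac1t-\frac1s}$.

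For the near part the kernel singularity is integrable, and I would dominate each annular piece pointwise by the bilinear Hardy--Littlewood maximal function $\mathcal M[f_1,f_2](x)=\sup_{r>0}|B(x,r)|^{-1}\int_{|y|<r}f_1(x+y)f_2(x-y)\,dy$, using $g_j(x)\lesssim 2^{jn}\mathcal M[f_1,f_2](x)$. Summing the near annuli (which converges because $\alpha>0$) gives the pointwise bound $\mathcal J_\alpha^{\mathrm{near}}[f_1,f_2](x)\lesssim\ell^{\alpha}\mathcal M[f_1,f_2](x)$, and since $\tfrac\alpha n-\tfrac1p=-\tfrac1s$ this reduces the near part to the boundedness of $\mathcal M$ from $\mathcal M^{p_1}_{q_1}\times\mathcal M^{p_2}_{q_2}$ into $\mathcal M^{p}_{t}$. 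That bilinear maximal estimate I would prove by localizing $f_1,f_2$ to $2Q$, invoking the $L^{q_1}\times L^{q_2}\to L^{q}$ bound for the bilinear maximal operator (available since $q_1,q_2>1$ and $q>\tfrac12$, the latter because $q>\tfrac s2\ge\tfrac12$) and feeding in the Morrey bounds on the dilates of $Q$, once more organized through the conjugate exponents $q_i/q$.

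The main obstacle is exactly the regime $q<1$, i.e. $\tfrac1{q_1}+\tfrac1{q_2}>1$, which is \emph{forced} here by $t\le1$ (one checks $q=pt/s<p/s<1$): the two inputs can never be separated by a single Hölder inequality in $y$, so there is neither a pointwise bilinear Hedberg/Adams inequality nor a direct Kenig--Stein reduction of the local piece. Indeed the target exponent $t$ exceeds the Kenig--Stein exponent $s_0$ defined by $\tfrac1{s_0}=\tfrac1q-\tfrac\alpha n$ (one verifies $\tfrac1t-\tfrac1{s_0}=\tfrac{(s-p)(q-p)}{spq}<0$), so applying Kenig--Stein on $2Q$ only lands in $L^{s_0}(Q)$ with $s_0<t$, the wrong direction. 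What rescues the argument is the hypothesis $s<\min(q_1,q_2)$, whose precise function is to make the conjugate-exponent Hölder with $q_i/q$ admissible and to force the annular geometric series to converge; carrying the full multi-scale Morrey information of $f_1,f_2$ through the average technique, rather than only their $L^{q_i}(2Q)$ norms, is what supplies the otherwise unavailable local integrability and closes the estimate.
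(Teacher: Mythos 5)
Your treatment of the far annuli is sound and is in substance the paper's own ``average technique'': integrating $g_j$ over $Q$ and changing variables to $u=x+y$, $v=x-y$ is exactly how the paper computes the averages $m_Q(F_{j,Q})$, and your exponent bookkeeping (convergence of the far series iff $\tfrac1q<1+\tfrac1s$, supplied by $s<\min(q_1,q_2)$ and $s\ge1$) is correct. The gap is in the near part. You reduce it to the bilinear maximal operator $\mathcal{M}[f_1,f_2](x)=\sup_r|B(0,r)|^{-1}\int_{|y|<r}f_1(x+y)f_2(x-y)\,dy$ and propose to ``invoke'' its $L^{q_1}\times L^{q_2}\to L^q$ boundedness for $q_1,q_2>1$ and $q>\tfrac12$. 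No such theorem is available: because $q<1$ here, this operator cannot be split by H\"older into $M^{(r_1)}f_1\cdot M^{(r_2)}f_2$ with admissible exponents (as you yourself observe), and the genuine bilinear maximal bound below $q=1$ is Lacey's deep time--frequency theorem, proved only for $q>2/3$ in one dimension; the range $\tfrac12<q\le\tfrac23$ is open, and the ball-average version in $\mathbb{R}^n$ in the full range $q_1,q_2>1$ is not something you can cite. Moreover, even granting that bound, your reduction does not close: localizing $f_1,f_2$ to $3Q$ and using an $L^q$ estimate yields control of $\|\mathcal{M}[f_1,f_2]\|_{L^q(Q)}$, whereas you need $\|\mathcal{M}[f_1,f_2]\|_{L^t(Q)}$ with $t=qs/p>q$; H\"older on the finite cube goes the wrong way, so localization alone can only produce the $\mathcal{M}^p_q$ target, not the required $\mathcal{M}^p_t$.

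The missing idea is to apply the averaging \emph{at every dyadic scale}, not only at scales above $\ell(Q)$. The paper decomposes $\mathcal{J}_\alpha$ over all dyadic cubes $Q\in\mathcal{D}_j$ of side $2^{-j}$ matched to the annulus $B(2^{-j})$, and then invokes the Iida--Sawano--Tanaka averaging theorem (Theorem \ref{thm:190322-3}, valid for $0<t\le1\le s<u$): inside the $\mathcal{M}^s_t$ quasi-norm one may replace each piece $F_{j,Q}$ by $m^{(u)}_Q(F_{j,Q})\chi_Q$ with $s<u<\min(q_1,q_2)$. That single step decouples $f_1$ and $f_2$ into $\bigl(\int_{(3Q)^2}(f_1f_2)^u\bigr)^{1/u}$ at all scales simultaneously --- including the small scales where your argument gets stuck --- and the resulting decoupled sum is handled by a Hedberg-type pointwise optimization (Lemma \ref{lem:190328-1}) using only the sublinear operators $M^{(u)}$, whose Morrey boundedness needs just $u<\min(q_1,q_2)$. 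In other words, the hypothesis $s<\min(q_1,q_2)$ is spent on choosing the exponent $u$ of the powered average, not on a bilinear maximal inequality; your near part should be rebuilt on Theorem \ref{thm:190322-3} (or, for $s<1$, Proposition \ref{prop:190322-1}) rather than on $\mathcal{M}$.
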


First we collect some necessary facts in Section \ref{s2},
Next we give an alternative proof of Theorem \ref{thm:190322-2} in Section \ref{s3},
Finally Theorem \ref{thm:190322-4} is proved in Section \ref{s4}.

\section{Preliminaries}
\label{s2}

For $f\in L^1_{\rm loc}(\mathbb{R}^n)$, define a function $Mf$ by
\[
Mf(x)\equiv\sup_Q\frac{\chi_Q(x)}{|Q|}\int_Q|f(y)|{\rm d}y, \quad x\in\mathbb{R}^n,
\]
where the supremum is over all cubes in $\mathbb{R}^n$. This is called the {\it Hardy-Littlewood maximal function}. And  the $\eta$-{\it powered Hardy-Littlewood maximal function} is defined by $M^{(\eta)}f\equiv\left(M[|f|^\eta]\right)^{\frac1{\eta}}$, for $f\in L^\eta_{\rm loc}(\mathbb{R}^n)$. 
In \cite{ChFr87}
Chiarenza and Frasca 
showed that
\[
\|M^{(\eta)}f\|_{\mathcal{M}^p_q}
\lesssim
\|f\|_{\mathcal{M}^p_q}, \quad f\in\mathcal{M}^p_q(\mathbb{R}^n)
\]
if $0<\eta<q\le p<\infty$.
For a dyadic cube $Q$ in $\mathbb{R}^n$, 
an average $m_Q(f)$ over $Q$ is defined by
\[
m_Q(f)\equiv\frac1{|Q|}\int_Q f(x){\rm d}x,
\]
for an integrable function $f$. 
Moreover, we write $m_Q^{(\eta)}(f)\equiv\left(m_Q(|f|^\eta)\right)^{\frac1\eta}$ for $\eta>0$
and a measurable function $f$. 
A dyadic cube is a set of the form $Q_{jk}\equiv\displaystyle\prod_{i=1}^n[k_i2^{-j},(k_i+1)2^{-j})$ for some $j\in\mathbb{Z}$, $k=(k_1,\ldots,k_n)\in\mathbb{Z}^n$, and define as follows:
\begin{align*}
&\mathcal{D}_j\equiv\{Q_{jk}:k\in\mathbb{Z}^n\}, \quad j\in\mathbb{Z},\\
&\mathcal{D}\equiv\{Q_{jk}:j\in\mathbb{Z}, k\in\mathbb{Z}^n\}=\bigcup_{j\in\mathbb{Z}}\mathcal{D}_j.
\end{align*}
We show a key estimate which is interesting of its own right 
by using the following proposition:

\begin{proposition}{\rm\cite[Lemma 3.1]{GrKa01}}\label{prop:190322-1}
Let $0<p \le 1$.
Then
\[
\left\|\sum_{j=1}^\infty f_j\right\|_{L^p}
\lesssim
\left\|\sum_{j=1}^\infty m_{Q_j}(f_j)\chi_{Q_j}\right\|_{L^p}
\] for all non-negative sequences
$\{f_j\}_{j=1}^\infty$
of integrable functions
such that 
each $f_j$ is supported on a cube $Q_j$.
\end{proposition}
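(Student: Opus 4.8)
The plan is to dispose of the endpoint $p=1$ by hand and to attack the range $0<p<1$ by duality. Throughout, write $F\equiv\sum_{j=1}^\infty f_j$ and $G\equiv\sum_{j=1}^\infty m_{Q_j}(f_j)\chi_{Q_j}$, both non-negative. When $p=1$ there is nothing to do: since each $f_j\ge0$ is supported on $Q_j$, we have $\int_{\mathbb{R}^n}f_j=m_{Q_j}(f_j)|Q_j|=\int_{\mathbb{R}^n}m_{Q_j}(f_j)\chi_{Q_j}$, and summing in $j$ gives $\|F\|_{L^1}=\|G\|_{L^1}$. So from now on assume $0<p<1$.

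For $0<p<1$ I would invoke the reverse-H\"older (duality) bound for the quasi-norm: for every weight $w>0$ with $\int_{\mathbb{R}^n}w^{-\frac{p}{1-p}}\,dx\le1$, H\"older's inequality with the exponents $1/p$ and $1/(1-p)$ yields $\|H\|_{L^p}\le\int_{\mathbb{R}^n}Hw\,dx$ for all $H\ge0$. I would apply this with $H=F$ and the specific admissible weight $w_0\equiv\|G\|_{L^p}^{1-p}G^{p-1}$, defined on $\{G>0\}$, which contains the support of $F$ since $x\in Q_j$ and $f_j(x)>0$ force $m_{Q_j}(f_j)>0$. A direct computation shows $\int_{\mathbb{R}^n}w_0^{-p/(1-p)}\,dx=1$ and $\int_{\mathbb{R}^n}G\,w_0\,dx=\|G\|_{L^p}$, so the whole statement collapses to the single linear inequality
\[
\int_{\mathbb{R}^n}F\,G^{p-1}\,dx\lesssim\int_{\mathbb{R}^n}G^p\,dx .
\]

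To handle this I would expand $F=\sum_j f_j$ and use the pointwise bound $G\ge m_{Q_j}(f_j)$ on $Q_j$, which holds because $Q_j$ already contributes the term $m_{Q_j}(f_j)\chi_{Q_j}$ to $G$. The crude estimate $G^{p-1}\le m_{Q_j}(f_j)^{p-1}$ on $Q_j$ is far too lossy, however: it gives only $\int F\,G^{p-1}\lesssim\sum_j m_{Q_j}(f_j)^p|Q_j|$, and the right-hand side can overshoot $\int G^p$ by an arbitrarily large factor when the cubes overlap heavily (already when many of the $Q_j$ coincide). The obstruction is genuinely global, since the mass of a single $f_j$ may concentrate exactly where $G$ is smallest on $Q_j$, so no term-by-term comparison survives. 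To exploit the overlaps I would decompose in the level sets of $G$, setting $\Omega_k\equiv\{G>2^k\}$ and estimating
\[
\int_{\mathbb{R}^n}F\,G^{p-1}\,dx
=\sum_{k\in\mathbb{Z}}\int_{\Omega_k\setminus\Omega_{k+1}}F\,G^{p-1}\,dx
\le\sum_{k\in\mathbb{Z}}2^{k(p-1)}\int_{\Omega_k\setminus\Omega_{k+1}}F\,dx ,
\]
to be weighed against the comparable expansion $\int_{\mathbb{R}^n}G^p\,dx\approx\sum_{k}2^{k(p-1)}\int_{\Omega_k\setminus\Omega_{k+1}}G\,dx$.

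The heart of the matter, and the step I expect to be the main obstacle, is that on a \emph{fixed} level the $F$-mass really can dominate the $G$-mass, so one must charge the excess of $\int_{\Omega_k\setminus\Omega_{k+1}}F$ against the $G$-mass on the \emph{higher} levels rather than on level $k$ alone. I would organise this redistribution through a Calder\'on--Zygmund stopping-time selection on $G$: take the maximal cubes where a local average of $G$ first crosses each threshold $2^k$, attach to each level only the portion of each $f_j$ whose cube stops there, and telescope. The point where $0<p\le1$ enters decisively is the convergence of the resulting geometric series in $k$: the weight $2^{k(p-1)}$, combined with the conserved total mass $\int F=\int G$, forces the bounded low-level excess of $F$ to be absorbed into the much larger high-level contribution of $\int G^p$. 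Carrying out this summation yields $\int F\,G^{p-1}\lesssim\int G^p$, and reversing the duality step recovers $\|F\|_{L^p}\lesssim\|G\|_{L^p}$, as claimed.
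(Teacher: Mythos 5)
The paper does not prove this proposition at all --- it is quoted verbatim as \cite[Lemma 3.1]{GrKa01} --- so there is no internal argument to compare yours with; I can only judge the proposal on its own terms. The first two of your three steps are fine: the case $p=1$ is indeed the identity $\|F\|_{L^1}=\|G\|_{L^1}$, and for $0<p<1$ the reverse--H\"older reduction is correct and cleanly executed (the weight $w_0=\|G\|_{L^p}^{1-p}G^{p-1}$ is admissible, $\int G\,w_0=\|G\|_{L^p}$, and $\operatorname{supp}F\subset\{G>0\}$), so the proposition would follow from the single inequality $\int F\,G^{p-1}\,dx\lesssim\int G^p\,dx$.

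The genuine gap is that this inequality --- which you yourself identify as ``the heart of the matter'' --- is never proved. After the reduction, all that is offered is a level-set decomposition of $\int F\,G^{p-1}$ and of $\int G^p$, followed by the assertion that a ``Calder\'on--Zygmund stopping-time selection on $G$'' with an unspecified way of ``attaching the portion of each $f_j$ whose cube stops there'' will ``telescope'' and make the series converge. None of these objects is defined: the cubes $Q_j$ are given data, not selected maximal cubes, so it is unclear what it means for $Q_j$ to ``stop'' at level $k$; no inequality is stated that relates the excess $\int_{\Omega_k\setminus\Omega_{k+1}}F-\int_{\Omega_k\setminus\Omega_{k+1}}G$ to higher-level $G$-mass; and in fact if one carries out the Abel summation your decomposition suggests, one lands back at the bound $\sum_j m_{Q_j}(f_j)|Q_j|\bigl(\inf_{Q_j}G\bigr)^{p-1}\lesssim\int G^p$, i.e.\ exactly the global statement that was to be proved, with no progress made. (I was unable to decide whether $\int F\,G^{p-1}\lesssim\int G^p$ is even true in general; it holds in every example I tested, but your fixed weight $w_0$ is the optimal one for $G$, not for $F$, so it is not a priori excluded that this particular linearization is strictly stronger than the proposition and fails.) As it stands, the entire difficulty of the lemma has been repackaged into an unproved claim, so the proposal is not a proof.
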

This proposition implies the following theorem.

\begin{theorem}\label{thm:190322-1}
Let $0<q \le p<1$.
Then
\[
\left\|\sum_{j=1}^\infty f_j\right\|_{{\mathcal M}^p_q}
\lesssim
\left\|\sum_{j=1}^\infty m_{Q_j}(f_j)\chi_{Q_j}\right\|_{{\mathcal M}^p_q}
\] 
for all non-negative sequences
$\{f_j\}_{j=1}^\infty$
of integrable functions
such that 
each $f_j$ is supported on a dyadic cube $Q_j$.
\end{theorem}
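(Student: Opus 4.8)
The plan is to fix a dyadic cube $Q\in\mathcal D$ and estimate the local quantity $|Q|^{1/p-1/q}\left(\int_Q(\sum_j f_j)^q\right)^{1/q}$ directly, writing $g\equiv\sum_j m_{Q_j}(f_j)\chi_{Q_j}$ for the right-hand function and aiming at a bound by $\|g\|_{\mathcal M^p_q}$ uniform in $Q$. Since all $f_j\ge 0$ and the cubes are dyadic, on $Q$ the sum splits according to the nesting of the supports: on $Q$ one has $\sum_j f_j=\sum_{Q_j\subseteq Q}f_j+\sum_{Q_j\supsetneq Q}f_j$, because supports disjoint from $Q$ drop out and any two dyadic cubes are either nested or disjoint. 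Using $q<1$ and the elementary inequality $(a+b)^q\le a^q+b^q$, it suffices to treat the ``small'' part $S\equiv\sum_{Q_j\subseteq Q}f_j$ and the ``large'' part $L\equiv\sum_{Q_j\supsetneq Q}f_j$ separately.

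For the small part I would apply Proposition \ref{prop:190322-1} with exponent $q$ (legitimate since $q\le p<1$) to the subfamily $\{f_j:Q_j\subseteq Q\}$, whose members are all supported inside $Q$. This gives $(\int_Q S^q)^{1/q}=\|S\|_{L^q}\lesssim\|\sum_{Q_j\subseteq Q}m_{Q_j}(f_j)\chi_{Q_j}\|_{L^q}\le(\int_Q g^q)^{1/q}$, the last step because the displayed sum is supported in $Q$ and is dominated by $g$. Multiplying by $|Q|^{1/p-1/q}$ and invoking the definition of the Morrey norm bounds this term by $\|g\|_{\mathcal M^p_q}$.

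The large part is the main obstacle, because the functions $f_j$ with $Q_j\supsetneq Q$ carry no averaging structure on the small cube $Q$, so Proposition \ref{prop:190322-1} cannot be applied on $Q$. Here I would first use H\"older's inequality (with $q<1$) to pass to the $L^1$ average, obtaining $|Q|^{1/p-1/q}(\int_Q L^q)^{1/q}\le|Q|^{1/p}\sum_{Q_j\supsetneq Q}m_Q(f_j)$. Then I would organize the large cubes by the dyadic ancestors $Q=R_0\subsetneq R_1\subsetneq R_2\subsetneq\cdots$ of $Q$: every $Q_j\supsetneq Q$ equals some $R_k$ with $k\ge1$, so setting $b_k\equiv\sum_{Q_j=R_k}m_{Q_j}(f_j)$ and using $\int_Q f_j\le\int_{R_k}f_j=|R_k|\,m_{R_k}(f_j)$ for each $j$ with $Q_j=R_k$ yields $|Q|^{1/p}\sum_{Q_j\supsetneq Q}m_Q(f_j)\le|Q|^{1/p-1}\sum_{k\ge1}|R_k|\,b_k$. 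The key point is that, since $g\ge b_k$ on $R_k$, the definition of the Morrey norm together with $\int_{R_k}g^q\ge b_k^q|R_k|$ gives $b_k\le|R_k|^{-1/p}\|g\|_{\mathcal M^p_q}$, whence the sum is controlled by $|Q|^{1/p-1}\|g\|_{\mathcal M^p_q}\sum_{k\ge1}|R_k|^{1-1/p}$.

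The final step, and the place where the hypothesis $p<1$ is essential, is that $|R_k|=2^{nk}|Q|$, so $\sum_{k\ge1}|R_k|^{1-1/p}=|Q|^{1-1/p}\sum_{k\ge1}2^{nk(1-1/p)}$ is a convergent geometric series (ratio $2^{n(1-1/p)}<1$) whose sum contributes only a dimensional constant and restores the factor $|Q|^{1-1/p}$; multiplying by the prefactor $|Q|^{1/p-1}$ the $Q$-dependence cancels exactly and the large part is bounded by $C\|g\|_{\mathcal M^p_q}$. Combining the two parts and taking the supremum over $Q\in\mathcal D$ finishes the proof. I expect the bookkeeping of the nested decomposition and the verification of the geometric decay to be the only delicate points; everything else is a routine application of H\"older's inequality and the cited proposition.
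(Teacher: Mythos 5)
Your proposal is correct and follows essentially the same route as the paper's proof: the same dyadic splitting into $Q_j\subseteq Q$ and $Q_j\supsetneq Q$, Proposition \ref{prop:190322-1} for the small cubes, and a H\"older-plus-geometric-series argument (relying on $p<1$ through the ratio $2^{n(1-1/p)}<1$) for the ancestors of $Q$. The only difference is bookkeeping on the large part: you group the ancestors into levels $R_k$ and bound the aggregated averages $b_k$ directly by $|R_k|^{-1/p}\|g\|_{{\mathcal M}^p_q}$, whereas the paper applies the $q$-triangle inequality term by term and bounds each $\|f_J\|_{L^1}$ individually; your grouping is, if anything, slightly more robust when many $f_j$ share the same supporting cube.
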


\begin{proof}
Fix a dyadic cube $Q$.
It suffices to show that
\begin{align*}
|Q|^{\frac1p-\frac1q}
\left\|\sum_{j=1}^\infty \chi_{Q}f_j\right\|_{L^q}
\lesssim
\left\|\sum_{j=1}^\infty m_{Q_j}(f_j)\chi_{Q_j}\right\|_{{\mathcal M}^p_q}.
\end{align*}
We split this estimate into two parts:
\begin{align*}
|Q|^{\frac1p-\frac1q}
\left\|\sum_{j \in {\mathbb N}, Q_j \subset Q} \chi_{Q}f_j\right\|_{L^q}
\lesssim
\left\|\sum_{j=1}^\infty m_{Q_j}(f_j)\chi_{Q_j}\right\|_{{\mathcal M}^p_q}
\end{align*}
and
\begin{align*}
|Q|^{\frac1p-\frac1q}
\left\|\sum_{j \in {\mathbb N}, Q_j \supset Q} \chi_{Q}f_j\right\|_{L^q}
\lesssim
\left\|\sum_{j=1}^\infty m_{Q_j}(f_j)\chi_{Q_j}\right\|_{{\mathcal M}^p_q}.
\end{align*}
The first estimate is a consequence
of Proposition \ref{prop:190322-1}.
Let us prove the second inequality.

We will show
\begin{align*}
|Q|^{\frac{q}p-1}
\left(\left\|\sum_{j \in {\mathbb N}, Q_j \supset Q} \chi_{Q}f_j\right\|_{L^q}
\right)^q
\lesssim
\left(
\left\|\sum_{j=1}^\infty m_{Q_j}(f_j)\chi_{Q_j}\right\|_{{\mathcal M}^p_q}
\right)^q.
\end{align*}
By the $q$-triangle inequality
\begin{align*}
|Q|^{\frac{q}p-1}
\left(\left\|\sum_{j \in {\mathbb N}, Q_j \supset Q} \chi_{Q}f_j\right\|_{L^q}
\right)^q
&\le
|Q|^{\frac{q}p-1}
\sum_{j \in {\mathbb N}, Q_j \supset Q}
\left(\left\| \chi_{Q}f_j\right\|_{L^q}
\right)^q\\
&\le
|Q|^{\frac{q}p-1}
\sum_{j \in {\mathbb N}, Q_j \supset Q}
\left(|Q|^{\frac1q-1}\left\| \chi_{Q}f_j\right\|_{L^1}
\right)^q.
\end{align*}
We fix $J \in {\mathbb N}$.
Then we have
\[
\left\|m_{Q_J}(f_J)\chi_{Q_J}\right\|_{{\mathcal M}^p_q}
\le
\left\|\sum_{j=1}^\infty m_{Q_j}(f_j)\chi_{Q_j}\right\|_{{\mathcal M}^p_q},
\]
or equivalently,
\[
\|f_J\|_{L^1} \le |Q_J|^{1-\frac1p}
\left\|\sum_{j=1}^\infty m_{Q_j}(f_j)\chi_{Q_j}\right\|_{{\mathcal M}^p_q}.
\]
Consequently,
\begin{align*}
|Q|^{\frac{q}p-1}
\left(\left\|\sum_{j \in {\mathbb N}, Q_j \supset Q} \chi_{Q}f_j\right\|_{L^q}
\right)^q
&\le
\left\|\sum_{j=1}^\infty m_{Q_j}(f_j)\chi_{Q_j}\right\|_{{\mathcal M}^p_q}
|Q|^{\frac{q}p-1}
\sum_{j \in {\mathbb N}, Q_j \supset Q}
|Q_j|^{q-\frac{q}p}|Q|^{1-q}\\
&\lesssim
\left\|\sum_{j=1}^\infty m_{Q_j}(f_j)\chi_{Q_j}\right\|_{{\mathcal M}^p_q}.
\end{align*}
This completes the proof.
\end{proof}
Similarly,
the estimate of $u$-powered type holds, too.

\begin{theorem}{\rm \cite{IST14}}\label{thm:190322-3}
Let $0<q \le 1 \le p<u$.
Then
\[
\left\|\sum_{j=1}^\infty f_j\right\|_{{\mathcal M}^p_q}
\lesssim
\left\|\sum_{j=1}^\infty m_{Q_j}^{(u)}(f_j)\chi_{Q_j}\right\|_{{\mathcal M}^p_q}
\] 
for all non-negative sequences
$\{f_j\}_{j=1}^\infty$
of integrable functions
such that 
each $f_j$ is supported on a dyadic cube $Q_j$.
\end{theorem}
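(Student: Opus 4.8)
The plan is to reproduce the architecture of the proof of Theorem \ref{thm:190322-1}, adapting it to the $u$-powered averages. First I would fix a dyadic cube $Q$ and, by the very definition of the Morrey norm, reduce the claim to the localized estimate
\[
|Q|^{\frac1p-\frac1q}\left\|\sum_{j=1}^\infty \chi_Q f_j\right\|_{L^q}
\lesssim
\left\|\sum_{j=1}^\infty m_{Q_j}^{(u)}(f_j)\chi_{Q_j}\right\|_{{\mathcal M}^p_q},
\]
with implicit constant independent of $Q$. Since each $f_j$ is supported on the dyadic cube $Q_j$ and two dyadic cubes that intersect are nested, the only indices contributing to $\chi_Q f_j$ are those with $Q_j\subset Q$ or $Q_j\supset Q$. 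I would therefore split the sum into these two families and estimate each separately, recombining at the end through the $q$-triangle inequality (legitimate because $q\le1$).

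For the family $Q_j\subset Q$, the argument is the easy half and does not use $p<u$. I would apply Proposition \ref{prop:190322-1} at the exponent $q$ (permitted since $0<q\le1$) to obtain $\|\sum_{Q_j\subset Q} f_j\|_{L^q}\lesssim\|\sum_{Q_j\subset Q} m_{Q_j}(f_j)\chi_{Q_j}\|_{L^q}$, and then invoke Jensen's inequality $m_{Q_j}(f_j)\le m_{Q_j}^{(u)}(f_j)$ (valid for $u\ge1$) to upgrade the ordinary averages to $u$-powered ones. Because the resulting function is supported in $Q$, multiplying by $|Q|^{\frac1p-\frac1q}$ and reading off the definition of $\|\cdot\|_{{\mathcal M}^p_q}$ at this particular $Q$ (and enlarging the index set from $Q_j\subset Q$ to all $j$, which only increases a nonnegative sum) closes this part.

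The main work, and the only place where the hypothesis $p<u$ enters, is the family $Q_j\supset Q$. Here I would first use the $q$-triangle inequality to pass to $\sum_{Q_j\supset Q}\|\chi_Q f_j\|_{L^q}^q$, and bound each term by Hölder's inequality with exponent $u/q>1$, giving $\|\chi_Q f_j\|_{L^q}\le |Q|^{\frac1q-\frac1u}|Q_j|^{\frac1u}m_{Q_j}^{(u)}(f_j)$. The key structural observation, exactly mirroring the single-term step in Theorem \ref{thm:190322-1}, is that $m_{Q_j}^{(u)}(f_j)|Q_j|^{\frac1p}=\|m_{Q_j}^{(u)}(f_j)\chi_{Q_j}\|_{{\mathcal M}^p_q}\le\|\sum_j m_{Q_j}^{(u)}(f_j)\chi_{Q_j}\|_{{\mathcal M}^p_q}$, so each factor $m_{Q_j}^{(u)}(f_j)$ is controlled by $|Q_j|^{-1/p}$ times the right-hand side. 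Substituting this and tracking the powers of $|Q|$ and $|Q_j|$, the sum over the dyadic ancestors $Q_j\supset Q$ collapses to a geometric series in $|Q_j|^{\frac qu-\frac qp}$, i.e. in $2^{nk(\frac qu-\frac qp)}$ over the scales $k\ge0$.

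The crux, which I expect to be the one genuinely delicate point, is the convergence of this geometric series: it converges precisely when $\frac qu-\frac qp<0$, that is, exactly when $p<u$, in complete analogy with the role played by $p<1$ in Theorem \ref{thm:190322-1}. Summing and collecting the surviving powers of $|Q|$ yields $|Q|^{\frac1p-\frac1q}\|\sum_{Q_j\supset Q}\chi_Q f_j\|_{L^q}\lesssim\|\sum_j m_{Q_j}^{(u)}(f_j)\chi_{Q_j}\|_{{\mathcal M}^p_q}$, and combining the two families finishes the proof. Beyond this scale-summation the remaining steps are routine exponent bookkeeping through the Hölder and Morrey-norm manipulations, so the entire conceptual content of the statement is concentrated in why $p<u$ makes the ancestor sum summable.
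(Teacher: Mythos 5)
Your proposal is correct and is exactly the ``$u$-powered'' adaptation of the paper's proof of Theorem \ref{thm:190322-1} that the paper merely gestures at with ``Similarly'' and a citation to \cite{IST14}: Proposition \ref{prop:190322-1} plus Jensen ($m_{Q_j}(f_j)\le m_{Q_j}^{(u)}(f_j)$, using $u\ge 1$) for the descendants, and H\"older with exponent $u/q$ plus the single-cube identity $\|m_{Q_j}^{(u)}(f_j)\chi_{Q_j}\|_{{\mathcal M}^p_q}=m_{Q_j}^{(u)}(f_j)|Q_j|^{1/p}$ for the ancestors. You also correctly identify that the role played by $p<1$ in Theorem \ref{thm:190322-1} is now played by $p<u$, which is what makes the geometric series over dyadic ancestors converge.
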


We obtain the following lemma similar to  in \cite[Lemma 2.1]{HaSa19}.

\begin{lemma}\label{lem:180615-41}
Let $f_1,f_2 \ge 0$ be measurable functions.
Then we have
\begin{align*}
{\mathcal J}_{\alpha}[f_1,f_2](x)
&\lesssim
\sum_{j=-\infty}^\infty\sum_{Q \in {\mathcal D}_j}
\frac{\chi_Q(x)}{\ell(Q)^{n-\alpha}}
\int_{B(2^{-j})}
f_1(x+y)f_2(x-y){\rm d}y
\quad (x \in {\mathbb R}^n).
\end{align*}
\end{lemma}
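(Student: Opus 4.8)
The plan is to reduce the asserted pointwise bound to the elementary dyadic decomposition of the Riesz-type kernel $|y|^{-(n-\alpha)}$ in the variable $y$. First I would unpack the right-hand side. For each fixed scale $j\in\mathbb{Z}$ the cubes in $\mathcal{D}_j$ tile $\mathbb{R}^n$, so $\sum_{Q\in\mathcal{D}_j}\chi_Q(x)=1$ for every $x$, and every $Q\in\mathcal{D}_j$ has side length $\ell(Q)=2^{-j}$. Hence the right-hand side collapses to
\[
\sum_{j=-\infty}^\infty 2^{j(n-\alpha)}\int_{B(2^{-j})}f_1(x+y)f_2(x-y)\,{\rm d}y,
\]
and it suffices to dominate $\mathcal{J}_\alpha[f_1,f_2](x)$ by this single sum over scales.

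Next I would split the defining integral over $\mathbb{R}^n$ into dyadic annuli in $|y|$, writing
\[
\mathcal{J}_\alpha[f_1,f_2](x)=\sum_{j=-\infty}^\infty\int_{2^{-j-1}\le|y|<2^{-j}}\frac{f_1(x+y)f_2(x-y)}{|y|^{n-\alpha}}\,{\rm d}y.
\]
On the $j$-th annulus one has $|y|\ge 2^{-j-1}$, and since $n-\alpha>0$ this yields $|y|^{-(n-\alpha)}\le 2^{(j+1)(n-\alpha)}=2^{n-\alpha}\cdot 2^{j(n-\alpha)}\lesssim 2^{j(n-\alpha)}$, with implied constant depending only on $n$ and $\alpha$. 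Pulling this constant out and enlarging the domain from the annulus $\{2^{-j-1}\le|y|<2^{-j}\}$ to the ball $B(2^{-j})=\{|y|<2^{-j}\}$, which is legitimate because $f_1,f_2\ge 0$, gives term by term
\[
\int_{2^{-j-1}\le|y|<2^{-j}}\frac{f_1(x+y)f_2(x-y)}{|y|^{n-\alpha}}\,{\rm d}y\lesssim 2^{j(n-\alpha)}\int_{B(2^{-j})}f_1(x+y)f_2(x-y)\,{\rm d}y.
\]
Summing over $j\in\mathbb{Z}$ and invoking the reduction of the first paragraph then yields the claim.

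There is no genuine obstacle here; the argument is purely the standard dyadic-annulus comparison for a Riesz-type kernel, matching the approach of \cite[Lemma 2.1]{HaSa19}. The only point demanding care is the bookkeeping of indices: one must check that the annular radius $2^{-j}$, the cube side length $\ell(Q)=2^{-j}$, and the ball $B(2^{-j})$ are aligned so that the factor $2^{j(n-\alpha)}$ produced by the kernel estimate coincides exactly with $\ell(Q)^{-(n-\alpha)}=2^{j(n-\alpha)}$, and that each annulus sits inside $B(2^{-j})$ so that the nonnegativity of $f_1,f_2$ permits enlarging the domain. Once the indexing is aligned, every step is a monotone comparison.
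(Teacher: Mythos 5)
Your proof is correct and is exactly the standard dyadic-annulus argument that the paper relies on: the paper itself gives no proof here, deferring to \cite[Lemma 2.1]{HaSa19}, which proceeds by the same decomposition of $|y|^{-(n-\alpha)}$ over the annuli $\{2^{-j-1}\le|y|<2^{-j}\}$ combined with the observation that $\sum_{Q\in\mathcal{D}_j}\chi_Q\equiv 1$ and $\ell(Q)=2^{-j}$. All the index bookkeeping you flag checks out, so nothing further is needed.
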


In addition, we use \cite[Lemma 2.3]{HaSa19}. For the bilinear fractional integral operator $\mathcal{I}_\alpha$, $0<\alpha<2n$ defined by
\[
\mathcal{I}_\alpha[f_1,f_2](x)\equiv\int_{\mathbb{R}^n}\frac{f_1(y_1)f_2(y_2)}{(|x-y_1|+|x-y_2|)^{2n-\alpha}}{\rm d}y_1{\rm d}y_2, \quad x\in\mathbb{R}^n,
\]
for integrable functions $f_1,f_2$,
introduced by Kenig and Stein \cite{KeSt99}, we obtain the pointwise estimate
\[
\mathcal{I}_\alpha[f_1,f_2](x)
\lesssim
\sum_{Q \in {\mathcal D}}
\frac{\chi_Q(x)}{\ell(Q)^{2n-\alpha}}
\int_{(3Q)^2}f_1(y_1)f_2(y_2){\rm d}y_1{\rm d}y_2,
\]
for non-negative measurable functions $f_1,f_2$ in \cite[Lemma 4.1]{ISST12}. Therefore using following lemma, we show that $\mathcal{I}_\alpha$ is bounded from $\mathcal{M}^{p_1}_{q_1}\times\mathcal{M}^{p_2}_{q_2}$ to $\mathcal{M}^s_t$.

\begin{lemma}\label{lem:190322-1}
Let
\[
0<\alpha<2 n, \quad
1<q_j \le p_j<\infty, \quad
0<q \le p<\infty, \quad
0<t \le s<\infty
\]
for $j=1,2$.
Assume
\[
\frac{1}{p}=\frac{1}{p_1}+\frac{1}{p_2}, \quad
\frac{1}{q}=\frac{1}{q_1}+\frac{1}{q_2}, 
\]
\[
\frac{1}{s}=\frac{1}{p}-\frac{\alpha}{n}, \quad
\frac{q}{p}=\frac{t}{s}.
\]
Then
\begin{equation*}
\left\|
\sum_{Q \in {\mathcal D}}
\frac{\chi_Q}{\ell(Q)^{2n-\alpha}}
\int_{(3Q)^2}f_1(y_1)f_2(y_2){\rm d}y_1{\rm d}y_2
\right\|_{\mathcal{M}^s_t}
\lesssim
\prod_{j=1}^2 \|f_j\|_{{\mathcal M}^{p_j}_{q_j}}
\end{equation*}
for all non-negative measurable functions 
$f_1,f_2$.
\end{lemma}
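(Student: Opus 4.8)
The plan is to reduce the operator bound to a statement about the averaging operators already developed in this section, and then feed everything into the sum-to-average estimates of Theorems~\ref{thm:190322-1} and~\ref{thm:190322-3}. First I would observe that on a dyadic cube $Q$ with sidelength $\ell(Q)$ we have, by H\"older's inequality applied separately to $f_1$ and $f_2$ on $3Q$,
\[
\int_{(3Q)^2} f_1(y_1) f_2(y_2)\,{\rm d}y_1{\rm d}y_2
= \left(\int_{3Q} f_1\right)\left(\int_{3Q} f_2\right)
\lesssim \ell(Q)^{2n} \, m_{3Q}^{(q_1)}(f_1)\, m_{3Q}^{(q_2)}(f_2),
\]
where the passage from the raw integral to the $q_j$-averages costs a factor $\ell(Q)^{n(1-1/q_j)}$ each and is controlled by the Morrey norms since $q_j>1$. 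Writing $c_Q \equiv m_{3Q}^{(q_1)}(f_1)\, m_{3Q}^{(q_2)}(f_2)$, the function inside the Morrey norm is therefore dominated pointwise by $\sum_{Q\in\mathcal{D}} \ell(Q)^{\alpha}\, c_Q\, \chi_Q$. The exponent $\ell(Q)^{\alpha}$ is exactly the fractional gain, and the relation $1/s = 1/p - \alpha/n$ is what will absorb it.

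Next I would set $g_Q \equiv \ell(Q)^{\alpha} c_Q \chi_Q$, a non-negative function supported on the dyadic cube $Q$, and recognize that $m_Q(g_Q) = \ell(Q)^{\alpha} c_Q$ (since $g_Q$ is constant on $Q$), so the average-coefficient form matches the hypotheses of the sum-to-average theorems exactly. In the regime $0<q\le p<1$ I would invoke Theorem~\ref{thm:190322-1} to replace $\bigl\|\sum_Q g_Q\bigr\|_{\mathcal{M}^p_q}$ by $\bigl\|\sum_Q m_Q(g_Q)\chi_Q\bigr\|_{\mathcal{M}^p_q}$, and in the complementary regime $p\ge 1$ I would use Theorem~\ref{thm:190322-3} with a suitable power $u$ chosen so that $q\le 1\le p<u$. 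In either case the problem collapses to estimating the Morrey norm of $\sum_Q \ell(Q)^{\alpha}\, m_{3Q}^{(q_1)}(f_1)\, m_{3Q}^{(q_2)}(f_2)\, \chi_Q$, which is now a genuinely pointwise, cube-by-cube expression.

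To finish, I would bound $m_{3Q}^{(q_1)}(f_1) \lesssim M^{(q_1)}f_1(x)$ and $m_{3Q}^{(q_2)}(f_2)\lesssim M^{(q_2)}f_2(x)$ for $x\in Q$ (the powered maximal functions dominate the powered averages over the enclosing cube), so that pointwise
\[
\sum_{Q\in\mathcal{D}} \ell(Q)^{\alpha}\, m_{3Q}^{(q_1)}(f_1)\, m_{3Q}^{(q_2)}(f_2)\, \chi_Q(x)
\lesssim \Bigl(\sup_{Q\ni x}\ell(Q)^{\alpha}\Bigr) M^{(q_1)}f_1(x)\, M^{(q_2)}f_2(x).
\]
The genuine work is in handling the scale factor $\ell(Q)^{\alpha}$: rather than a naive supremum one controls it by grouping cubes of fixed generation and summing the geometric-in-scale contributions, exploiting that on each $Q$ the product of averages is itself bounded by $\ell(Q)$-powers of the Morrey norms through the $q/p=t/s$ and $1/s=1/p-\alpha/n$ relations. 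Once the pointwise bound by a product of powered maximal functions is in hand, I would apply H\"older's inequality for Morrey spaces together with the Chiarenza--Frasca estimate $\|M^{(q_j)}f_j\|_{\mathcal{M}^{p_j}_{q_j}}\lesssim \|f_j\|_{\mathcal{M}^{p_j}_{q_j}}$ (valid since $q_j>q_j$ fails but the correct powered bound uses $0<q_j\le p_j$ with an admissible exponent below $q_j$), yielding the claimed product bound. The main obstacle I anticipate is precisely the bookkeeping of the fractional scale factor $\ell(Q)^{\alpha}$ across dyadic generations: one must verify that the scaling dictated by the four balance relations makes this sum converge and reproduce exactly the Morrey exponents $\mathcal{M}^s_t$ on the left and $\mathcal{M}^{p_j}_{q_j}$ on the right, and getting the exponent arithmetic to close is where the hypotheses $1/s=1/p-\alpha/n$ and $q/p=t/s$ must be used in full.
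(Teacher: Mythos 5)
There is a genuine gap, and it sits exactly where you say you anticipate an ``obstacle.'' Your reduction to the pointwise quantity $\sum_{Q\in\mathcal D}\ell(Q)^{\alpha}\,m_{3Q}^{(q_1)}(f_1)\,m_{3Q}^{(q_2)}(f_2)\,\chi_Q(x)$ is fine, but the displayed bound by $\bigl(\sup_{Q\ni x}\ell(Q)^{\alpha}\bigr)M^{(q_1)}f_1(x)M^{(q_2)}f_2(x)$ is false: the dyadic cubes containing $x$ have arbitrarily large sidelength, so the supremum is infinite, and even generation-by-generation the sum $\sum_{Q\ni x}\ell(Q)^{\alpha}=\sum_j 2^{-j\alpha}$ diverges at the large-scale end. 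The essential point --- which your proposal only gestures at --- is that large and small scales must be treated by different mechanisms and then balanced. The paper's proof (given explicitly for the $u$-powered version, Lemma~\ref{lem:190328-1}, and applying verbatim with $u=1$) is a Hedberg-type argument: for $\ell(Q)\le L$ one bounds the averages by maximal functions and sums the geometric series to get $L^{\alpha}Mf_1(x)Mf_2(x)$; for $\ell(Q)>L$ one instead bounds $\|f_j\|_{L^{q_j}(3Q)}$ by the Morrey norm, which produces the decaying factor $\ell(Q)^{\alpha-n/p}=\ell(Q)^{-n/s}$ and hence $L^{-n/s}\|f_1\|_{\mathcal M^{p_1}_{q_1}}\|f_2\|_{\mathcal M^{p_2}_{q_2}}$; optimizing over $L$ yields the interpolated pointwise bound $(Mf_1Mf_2)^{p/s}(\|f_1\|\|f_2\|)^{1-p/s}$, which has the correct homogeneity to land in $\mathcal M^s_t$ rather than $\mathcal M^p_q$. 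Without this split-and-optimize step your argument cannot convert the exponent $p$ into $s$; a bound by a constant times $M^{(q_1)}f_1\cdot M^{(q_2)}f_2$ alone, even if it were finite, would only give an $\mathcal M^p_q$ estimate.

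Two further problems: the appeal to Theorems~\ref{thm:190322-1} and~\ref{thm:190322-3} is vacuous here, since your $g_Q=\ell(Q)^{\alpha}c_Q\chi_Q$ is already constant on $Q$, so $m_Q(g_Q)\chi_Q=g_Q$ and the sum-to-average step replaces the expression by itself (those theorems are needed in Sections~\ref{s3} and~\ref{s4} to pass from $F_{j,Q}$ to its average, not in this lemma). And the Chiarenza--Frasca bound requires the power $\eta$ to be strictly less than $q$, so $M^{(q_j)}$ is \emph{not} known to be bounded on $\mathcal M^{p_j}_{q_j}$; your own parenthetical flags this. In the present lemma one simply uses the unpowered maximal function ($u=1<q_j$) for the small-scale part, reserving the $q_j$-integrability for the H\"older step on large cubes.
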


By similar to prove Lemma \ref{lem:190322-1}, we obtain the result of $u$-powered type.

\begin{lemma}\label{lem:190328-1}
Let
\[
0<\alpha<2 n, \quad
1<q_j \le p_j<\infty, \quad
0<q \le p<\infty, \quad
0<t \le s<\infty, \quad
0<u<\infty
\]
for $j=1,2$.
Assume
\[
\frac{1}{p}=\frac{1}{p_1}+\frac{1}{p_2}, \quad
\frac{1}{q}=\frac{1}{q_1}+\frac{1}{q_2}, 
\]
\[
\frac{1}{s}=\frac{1}{p}-\frac{\alpha}{n}, \quad
\frac{q}{p}=\frac{t}{s}, \quad
s<u<\min(q_1,q_2).
\]
Then we have
\begin{equation*}
\left\|
\sum_{Q \in {\mathcal D}}\frac{\chi_Q}{\ell(Q)^{\frac{2n}u-\alpha}}\left(\int_{(3Q)^2}(f_1(y_1)f_2(y_2))^u{\rm d}y_1{\rm d}y_2\right)^{\frac1u}
\right\|_{\mathcal{M}^s_t}
\lesssim
\prod_{j=1}^2 \|f_j\|_{{\mathcal M}^{p_j}_{q_j}}
\end{equation*}
for all non-negative measurable functions 
$f_1,f_2$.
\end{lemma}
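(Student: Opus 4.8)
The plan is to reduce the coefficient attached to each dyadic cube to a product of $u$-powered averages and then run an Adams-type pointwise argument, paralleling the proof of Lemma \ref{lem:190322-1} with the maximal operator $M$ replaced throughout by its $u$-powered analogue $M^{(u)}$. First I would use Fubini to factor $\int_{(3Q)^2}(f_1(y_1)f_2(y_2))^u\,\mathrm{d}y_1\,\mathrm{d}y_2=\big(\int_{3Q}f_1^u\big)\big(\int_{3Q}f_2^u\big)$ and rewrite each factor through the powered average, $\big(\int_{3Q}f_j^u\big)^{1/u}=|3Q|^{1/u}m_{3Q}^{(u)}(f_j)$. Since $|3Q|=3^n\ell(Q)^n$, the coefficient $\ell(Q)^{\alpha-2n/u}\big(\int_{(3Q)^2}(f_1f_2)^u\big)^{1/u}$ collapses to a constant multiple of $\ell(Q)^{\alpha}\,m_{3Q}^{(u)}(f_1)\,m_{3Q}^{(u)}(f_2)$, so the left-hand side is dominated by $\sum_{Q\in\mathcal D}\ell(Q)^{\alpha}m_{3Q}^{(u)}(f_1)m_{3Q}^{(u)}(f_2)\chi_Q$.

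Next I would fix $x$ and observe that only the dyadic cubes containing $x$ contribute, one per scale; writing $Q_j(x)\in\mathcal D_j$ for the cube of side $2^{-j}$ through $x$, the sum becomes $\sum_{j\in\mathbb Z}2^{-j\alpha}m_{3Q_j(x)}^{(u)}(f_1)m_{3Q_j(x)}^{(u)}(f_2)$. I would bound each powered average in two complementary ways: by the powered maximal function, $m_{3Q_j(x)}^{(u)}(f_i)\le M^{(u)}f_i(x)$ since $3Q_j(x)\ni x$; and by the Morrey norm via Hölder (legitimate because $u<q_i$) together with the definition of $\mathcal M^{p_i}_{q_i}$, giving $m_{3Q_j(x)}^{(u)}(f_i)\lesssim|3Q_j(x)|^{-1/p_i}\|f_i\|_{\mathcal M^{p_i}_{q_i}}\simeq 2^{jn/p_i}\|f_i\|_{\mathcal M^{p_i}_{q_i}}$. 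Splitting the sum over $j$ at an optimally chosen scale $N$ — using the product of maximal functions on the small cubes ($j\ge N$, where $\sum 2^{-j\alpha}$ converges as $\alpha>0$) and the Morrey bound on the large cubes ($j<N$, where $\sum 2^{j(n/p-\alpha)}=\sum 2^{jn/s}$ converges as $\frac1s=\frac1p-\frac\alpha n>0$) — and balancing the two geometric series, I expect to reach the Adams-type pointwise estimate
\[
\sum_{Q\in\mathcal D}\ell(Q)^{\alpha}m_{3Q}^{(u)}(f_1)m_{3Q}^{(u)}(f_2)\chi_Q(x)\lesssim\big(M^{(u)}f_1(x)\,M^{(u)}f_2(x)\big)^{p/s}\big(\|f_1\|_{\mathcal M^{p_1}_{q_1}}\|f_2\|_{\mathcal M^{p_2}_{q_2}}\big)^{1-p/s},
\]
where the exponent $p/s=1-\alpha p/n$ is forced by the balancing. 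This optimization is the heart of the matter and the step I expect to demand the most care, since the correct split point and the exponent bookkeeping hinge on the two relations $\frac1s=\frac1p-\frac\alpha n$ and $\frac qp=\frac ts$.

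With the pointwise bound in hand, I would finish by taking the $\mathcal M^s_t$ quasi-norm. Using the homogeneity $\big\||h|^{p/s}\big\|_{\mathcal M^s_t}=\|h\|^{p/s}_{\mathcal M^{p}_{q}}$ — valid since $s\cdot\frac ps=p$ and $t\cdot\frac ps=q$ by $\frac qp=\frac ts$ — the task reduces to bounding $\|M^{(u)}f_1\cdot M^{(u)}f_2\|_{\mathcal M^p_q}$. Hölder's inequality on Morrey spaces (with $\frac1p=\frac1{p_1}+\frac1{p_2}$ and $\frac1q=\frac1{q_1}+\frac1{q_2}$) splits this as $\|M^{(u)}f_1\|_{\mathcal M^{p_1}_{q_1}}\|M^{(u)}f_2\|_{\mathcal M^{p_2}_{q_2}}$, and the Chiarenza--Frasca estimate controls each factor by $\|f_i\|_{\mathcal M^{p_i}_{q_i}}$; here the hypothesis $u<\min(q_1,q_2)$ is exactly what licenses the boundedness of $M^{(u)}$ on $\mathcal M^{p_i}_{q_i}$. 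Multiplying by the leftover factor $\big(\|f_1\|_{\mathcal M^{p_1}_{q_1}}\|f_2\|_{\mathcal M^{p_2}_{q_2}}\big)^{1-p/s}$ from the pointwise estimate recombines the two powers into the full product $\|f_1\|_{\mathcal M^{p_1}_{q_1}}\|f_2\|_{\mathcal M^{p_2}_{q_2}}$, which completes the proof.
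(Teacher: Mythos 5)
Your proposal is correct and follows essentially the same route as the paper: a Hedberg-type splitting of the dyadic sum at an optimally chosen scale, yielding the pointwise Adams bound $(M^{(u)}f_1\,M^{(u)}f_2)^{p/s}(\|f_1\|_{\mathcal M^{p_1}_{q_1}}\|f_2\|_{\mathcal M^{p_2}_{q_2}})^{1-p/s}$, followed by the homogeneity identity for $\mathcal M^s_t$, H\"older's inequality on Morrey spaces, and the Chiarenza--Frasca boundedness of $M^{(u)}$. The only cosmetic difference is that you make the Fubini factorization over $(3Q)^2$ and the per-scale reindexing explicit, where the paper folds these into its estimates of $S_1$ and $S_2$.
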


\begin{proof}
Let $L=L(x)$ be a  positive number that is specified shortly. Similar to a method by Hedberg in \cite{Hedberg72}, We decompose
\begin{align*}
&\sum_{Q \in {\mathcal D}}\frac{\chi_Q(x)}{\ell(Q)^{\frac{2n}u-\alpha}}\left(\int_{(3Q)^2}|f_1(y_1)f_2(y_2)|^u{\rm d}y_1{\rm d}y_2\right)^{\frac1u}\\
&\hspace{1cm}=
\sum_{Q \in {\mathcal D},\ell(Q)\le L}\frac{\chi_Q(x)}{\ell(Q)^{\frac{2n}u-\alpha}}\left(\int_{(3Q)^2}|f_1(y_1)f_2(y_2)|^u{\rm d}y_1{\rm d}y_2\right)^{\frac1u}\\
&\hspace{1cm}\quad+
\sum_{Q \in {\mathcal D},\ell(Q)>L}\frac{\chi_Q(x)}{\ell(Q)^{\frac{2n}u-\alpha}}\left(\int_{(3Q)^2}|f_1(y_1)f_2(y_2)|^u{\rm d}y_1{\rm d}y_2\right)^{\frac1u}\\
&\hspace{1cm}=:
S_1+S_2.
\end{align*}
First, we estimate the quantity $S_1$.
\[
S_1
\lesssim
L^\alpha M^{(u)}f_1(x)M^{(u)}f_2(x).
\]
Next, we estimate the quantity $S_2$. By H\"{o}lder's inequality,
\begin{align*}
S_2
&\lesssim
\sum_{Q \in {\mathcal D},\ell(Q)>L}\frac{\chi_Q(x)}{\ell(Q)^{\frac{2n}u-\alpha}}\left(|Q|^{\frac1{(q_1/u)'}}\||f_1|^u\|_{L^{\frac{q_1}u}(3Q)}\right)^{\frac1u}
\left(|Q|^{\frac1{(q_2/u)'}}\||f_2|^u\|_{L^{\frac{q_2}u}(3Q)}\right)^{\frac1u}\\
&=
\sum_{Q \in {\mathcal D},\ell(Q)>L}\chi_Q(x)|Q|^{\frac\alpha n-\frac1{q_1}-\frac1{q_2}}\|f_1\|_{L^{q_1}(3Q)}\|f_2\|_{L^{q_2}(3Q)}\\
&\lesssim
L^{-\frac ns}\|f_1\|_{\mathcal{M}^{p_1}_{q_1}}\|f_2\|_{\mathcal{M}^{p_2}_{q_2}}
\end{align*}
Hence we obtain
\begin{align*}
&\sum_{Q \in {\mathcal D}}\frac{\chi_Q(x)}{\ell(Q)^{\frac{2n}u-\alpha}}\left(\int_{(3Q)^2}|f_1(y_1)f_2(y_2)|^u{\rm d}y_1{\rm d}y_2\right)^{\frac1u}\\
&\hspace{2cm}\lesssim
L^\alpha M^{(u)}f_1(x)M^{(u)}f_2(x)
+L^{-\frac ns}\|f_1\|_{{\mathcal M}^{p_1}_{q_1}}\|f_2\|_{{\mathcal M}^{p_2}_{q_2}}.
\end{align*}
In particular, choose the constant $L=L(x)$ 
to optimize the right-hand side:
\[
L=\left(\frac{\|f_1\|_{{\mathcal M}^{p_1}_{q_1}}\|f_2\|_{{\mathcal M}^{p_2}_{q_2}}}{M^{(u)}f_1(x)M^{(u)}f_2(x)}\right)^{\frac pn}.
\]
Then we have
\begin{align*}
&\sum_{Q \in {\mathcal D}}\frac{\chi_Q(x)}{\ell(Q)^{\frac{2n}{u}-\alpha}}\left(\int_{(3Q)^2}|f_1(y_1)f_2(y_2)|^u{\rm d}y_1{\rm d}y_2\right)^{\frac1u}\\
&\hspace{2cm}\lesssim
(M^{(u)}f_1(x)M^{(u)}f_2(x))^{\frac ps}\left(\|f_1\|_{{\mathcal M}^{p_1}_{q_1}}\|f_2\|_{{\mathcal M}^{p_2}_{q_2}}\right)^{1-\frac ps}.
\end{align*}
Therefore, using H$\ddot{\rm o}$lder's inequality
for Morrey spaces,
the ${\mathcal M}^{p_1}_{q_1}({\mathbb R}^n)$-boundedness
of $M^{(u)}$
and
the ${\mathcal M}^{p_1}_{q_1}({\mathbb R}^n)$-boundedness
of $M^{(u)}$,
we have
\begin{align*}
&
\left\|
\sum_{Q \in {\mathcal D}}\frac{\chi_Q}{\ell(Q)^{\frac{2n}u-\alpha}}\left(\int_{(3Q)^2}|f_1(y_1)f_2(y_2)|^u{\rm d}y_1{\rm d}y_2\right)^{\frac1u}
\right\|_{\mathcal{M}^s_t}\\
&\hspace{2cm}\lesssim\left\|(M^{(u)}f_1\cdot M^{(u)}f_2)^{\frac ps}\right\|_{{\mathcal M}^s_t}\left(\|f_1\|_{{\mathcal M}^{p_1}_{q_1}}\|f_2\|_{{\mathcal M}^{p_2}_{q_2}}\right)^{1-\frac ps}\\
&\hspace{2cm}=\|M^{(u)}f_1\cdot M^{(u)}f_2\|_{{\mathcal M}^p_q}^{\frac ps}\left(\|f_1\|_{{\mathcal M}^{p_1}_{q_1}}\|f_2\|_{{\mathcal M}^{p_2}_{q_2}}\right)^{1-\frac ps}\\
&\hspace{2cm}\le\left(\|M^{(u)}f_1\|_{{\mathcal M}^{p_1}_{q_1}}\|M^{(u)}f_2\|_{{\mathcal M}^{p_2}_{q_2}}\right)^{\frac ps}\left(\|f_1\|_{{\mathcal M}^{p_1}_{q_1}}\|f_2\|_{{\mathcal M}^{p_2}_{q_2}}\right)^{1-\frac ps}\\
&\hspace{2cm}\lesssim\|f_1\|_{{\mathcal M}^{p_1}_{q_1}}\|f_2\|_{{\mathcal M}^{p_2}_{q_2}}.
\end{align*}
\end{proof}

\section{Proof of Theorem \ref{thm:190322-2}}
\label{s3}

For each $Q\in\mathcal{D}_j$, we abbreviate
\[
F_{j,Q}\equiv\frac{\chi_Q}{\ell(Q)^{n-\alpha}}\int_{B(2^{-j})}f_1(\cdot+y)f_2(\cdot-y){\rm d}y.
\]
We calculate its average  $m_Q(F_{j,Q})$ as follows:
\begin{align*}
m_Q(F_{j,Q})
&=
\frac1{\ell(Q)^{n-\alpha}}\fint_Q\int_{B(2^{-j})}f_1(x+y)f_2(x-y){\rm d}y{\rm d}x\\
&\lesssim
\frac1{\ell(Q)^{2n-\alpha}}\int_{(3Q)^2}f_1(y_1)f_2(y_2){\rm d}y_1{\rm d}y_2.
\end{align*}
Hence by Theorem \ref{thm:190322-1} and Lemma \ref{lem:180615-41},
\begin{align*}
\|\mathcal{J}_\alpha[f_1,f_2]\|_{\mathcal{M}^s_t}
&\lesssim
\left\|\sum_{j=-\infty}^\infty\sum_{Q\in\mathcal{D}_j}F_{j,Q}\right\|_{\mathcal{M}^s_t}\\
&\lesssim
\left\|\sum_{j=-\infty}^\infty\sum_{Q\in\mathcal{D}_j}m_Q(F_{j,Q})\chi_Q\right\|_{\mathcal{M}^s_t}\\
&\lesssim
\left\|\sum_{j=-\infty}^\infty\sum_{Q\in\mathcal{D}_j}\frac{\chi_Q}{\ell(Q)^{2n-\alpha}}\int_{(3Q)^2}f_1(y_1)f_2(y_2){\rm d}y_1{\rm d}y_2\right\|_{\mathcal{M}^s_t}.
\end{align*}
Finally using Lemma \ref{lem:190322-1}, we obtain the result.

\section{Proof of Theorem \ref{thm:190322-4}}
\label{s4}

Define $F_{j,Q}$ as above, and fix a parameter $u$ such that $s<u<\min(q_1,q_2)$.
Thus we calculate its average $m^{(u)}_Q(F_{j,Q})$ as follows:
\begin{align*}
m^{(u)}_Q(F_{j,Q})
&=
\frac1{\ell(Q)^{n-\alpha}}\left\{\fint_Q\left(\int_{B(2^{-j})}f_1(x+y)f_2(x-y){\rm d}y\right)^u{\rm d}x\right\}^{\frac1u}\\
&\le
\frac1{\ell(Q)^{n-\alpha}}\int_{B(2^{-1})}\left(\fint_Q|f_1(x+y)f_2(x-y)|^u{\rm d}x\right)^{\frac1u}{\rm d}y\\
&\le
\frac1{\ell(Q)^{n-\alpha}}|B(2^{-j})|^{\frac1{u'}}\left(\int_{B(2^{-j})}\fint_Q|f_1(x+y)f_2(x-y)|^u{\rm d}x{\rm d}y\right)^{\frac1u}\\
&\lesssim
\frac1{\ell(Q)^{\frac{2n}u-\alpha}}\left(\int_{(3Q)^2}|f_1(y_1)f_2(y_2)|^u{\rm d}y_1{\rm d}y_2\right)^{\frac1u}.
\end{align*}
Hence by Theorem \ref{thm:190322-3} and Lemma \ref{lem:180615-41},
\begin{align*}
\|\mathcal{J}_\alpha[f_1,f_2]\|_{\mathcal{M}^s_t}
&\lesssim
\left\|\sum_{j=-\infty}^\infty\sum_{Q\in\mathcal{D}_j}F_{j,Q}\right\|_{\mathcal{M}^s_t}\\
&\lesssim
\left\|\sum_{j=-\infty}^\infty\sum_{Q\in\mathcal{D}_j}m^{(u)}_Q(F_{j,Q})\chi_Q\right\|_{\mathcal{M}^s_t}\\
&\lesssim
\left\|\sum_{j=-\infty}^\infty\sum_{Q\in\mathcal{D}_j}\frac{\chi_Q}{\ell(Q)^{\frac{2n}u-\alpha}}\left(\int_{(3Q)^2}|f_1(y_1)f_2(y_2)|^u{\rm d}y_1{\rm d}y_2\right)^{\frac1u}\right\|_{\mathcal{M}^s_t}.
\end{align*}
Finally using Lemma \ref{lem:190328-1}, we obtain the result.

\section*{Acknowledgement}

The author would like to be thankful
to his advisor Professor Kotaro Tsugawa
for his guidance and advise, 
and be grateful to Professor Yoshihiro Sawano, in Tokyo Metropolitan University, 
for his many kinds of ideas and answering many questions.
In particular, Sawano gave him a hint to Theorem \ref{thm:190322-1}.

\end{document}